\documentclass[a4paper]{amsart}

\usepackage[T1]{fontenc}
\usepackage{enumerate, amsmath, amsfonts, amssymb, amsthm, mathrsfs, wasysym, graphics, graphicx, xcolor, url, hyperref, hypcap,  shuffle, xargs, multicol, overpic, pdflscape, multirow, hvfloat, minibox, accents, array, xifthen, a4wide, ae, aecompl, paralist, lmodern}
\hypersetup{colorlinks=true, citecolor=darkblue, linkcolor=darkblue}
\usepackage[all]{xy}
\usepackage[bottom]{footmisc}
\graphicspath{{figures/}}
\usepackage{caption}
\captionsetup{width=\textwidth}

\title{The serpent nest conjecture for accordion complexes}

\thanks{Supported by a French doctoral grant Gaspard Monge of the {\'E}cole polytechnique (Palaiseau, France) and partially supported by the French ANR grant SC3A (15 CE40 0004 01).}

\author{Thibault Manneville}
\address[Thibault Manneville]{LIX, \'Ecole Polytechnique}
\email{thibault.manneville@polytechnique.edu}
\urladdr{\url{http://www.lix.polytechnique.fr/~manneville/}}


\newtheorem{theorem}{Theorem}

\newtheorem{lemma}[theorem]{Lemma}
\newtheorem{conjecture}[theorem]{Conjecture}

\theoremstyle{definition}

\newtheorem{example}[theorem]{Example}
\newtheorem{remark}[theorem]{Remark}


\newcommand{\ssm}{\smallsetminus} 
\newcommand{\eqdef}{\mbox{\,\raisebox{0.2ex}{\scriptsize\ensuremath{\mathrm:}}\ensuremath{=}\,}} 



\newcommand{\fref}[1]{Figure~\ref{#1}} 
\definecolor{darkblue}{rgb}{0,0,0.7} 
\newcommand{\darkblue}{\color{darkblue}} 
\newcommand{\defn}[1]{\textsl{\darkblue #1}} 

\usepackage{todonotes}

\newcommand{\accordionComplex}{\mathcal{AC}} 
\newcommand{\polygon}{\mathcal{P}} 
\newcommand{\triangulation}{\mathrm{T}} 
\newcommand{\quadrangulation}{\mathrm{Q}} 
\newcommand{\dissection}{\mathrm{D}} 
\newcommand{\cell}{\mathrm{C}} 
\newcommand{\accordion}{\mathrm{A}} 
\newcommand{\zigzag}{\mathrm{Z}} 
\newcommand{\ex}{\mathrm{ex}} 
\renewcommand{\restriction}[2]{\left.\kern-\nulldelimiterspace #1 \vphantom{\big|} \right|_{#2}}
\newcommand{\dual}{\star}


\newcommandx{\setOfDiagonals}[1][1=S]{\mathrm{#1}}

\newcommand{\serpent}{\mathrm{S}}

\newcommand{\serpentNest}{\mathrm{N}}
\newcommand{\allSerpentNests}{\mathcal{SN}}
\newcommand{\bijectionDissectionsToSerpentNests}[1]{\Phi_{#1}}
\newcommand{\bijectionSerpentNestsToDissections}[1]{\Psi_{#1}}

\begin{document}
\phantom{a}
\vspace{-1.5cm}

\maketitle

\phantom{a}
\vspace{-.9cm}

\begin{abstract}
Consider $2n$ points on the unit circle and a reference dissection~$\dissection_\circ$ of the convex hull of the odd points. The accordion complex of~$\dissection_\circ$ is the simplicial complex of subsets of pairwise noncrossing diagonals with even endpoints that cross a connected set of diagonals of the dissection~$\dissection_\circ$. In particular, this complex is an associahedron when~$\dissection_\circ$ is a triangulation, and a Stokes complex when~$\dissection_\circ$ is a quadrangulation. We exhibit a bijection between the facets of the accordion complex of~$\dissection_\circ$ and some dual objects called the serpent nests of~$\dissection_\circ$. This confirms in particular a prediction of F.~Chapoton (2016) in the case of Stokes complexes.

\medskip
\noindent
\textsc{keywords.} Accordion complex $\cdot$ Dissections $\cdot$ Stokes complexes $\cdot$ Associahedron $\cdot$ Serpent nests.
\end{abstract}

\section{Introduction}
\label{sec:introduction}

\subsection{Motivations}
\label{subsec:motivations}

Y.~Baryshnikov introduced in~\cite{Baryshnikov} the definition of a~\defn{Stokes complex}, namely the simplicial complex of dissections of a polygon that are in some sense compatible with a reference quadrangulation~$\quadrangulation_\circ$. Although the precise definition of compatibility is a bit technical in~\cite{Baryshnikov}, it turns out that a diagonal is compatible with~$\quadrangulation_\circ$ if and only if it crosses a connected subset of diagonals of a slightly rotated version of~$\quadrangulation_\circ$, that we call an~\defn{accordion} of~$\quadrangulation_\circ$. We therefore also call Y.~Baryshnikov's simplicial complex the \defn{accordion complex}~$\accordionComplex(\quadrangulation_\circ)$ of~$\quadrangulation_\circ$. As an example, this complex coincides with the classical associahedron when all the diagonals of the reference quadrangulation~$\quadrangulation_\circ$ have a common endpoint. Revisiting some combinatorial and algebraic properties of~$\accordionComplex(\quadrangulation_\circ)$, F.~Chapoton~\cite{Chapoton-quadrangulations} raised three challenges: first prove that the dual graph of~$\accordionComplex(\quadrangulation_\circ)$, suitably oriented, has a lattice structure extending the Tamari and Cambrian lattices~\cite{Tamari,TamariFestschrift, Reading-CambrianLattices}; second construct geometric realizations of~$\accordionComplex(\quadrangulation_\circ)$ as fans and polytopes generalizing the known constructions of the associahedron; third show enumerative properties of the faces of~$\accordionComplex(\quadrangulation_\circ)$, among which he expects a bijection to exist between the facets of~$\accordionComplex(\quadrangulation_\circ)$ and other combinatorial objects called~\emph{serpent nests}.
These three challenges are evoked in the introduction of~\cite{Chapoton-quadrangulations} respectively at paragraph 22, last paragraph and paragraph 15. The serpent nest conjecture is also a specialization of~\cite[Conjecture 45]{Chapoton-quadrangulations} for~$x=y=1$. Serpent nests are essentially special sets of dual paths in the dual tree of the reference quadrangulation~$\quadrangulation_\circ$. As for the two other challenges, their study is related to extensions of known phenomena on the associahedron. Serpent nests are indeed expected by F.~Chapoton to play the same role towards Stokes complexes as nonnesting partitions towards associahedra. The serpent nest conjecture therefore morally asserts that the fact that nonnesting partitions are in bijection with triangulations of convex polygons holds in the more general context of Stokes complexes.

In~\cite{GarverMcConville}, A.~Garver and T.~McConville defined and studied the accordion complex~$\accordionComplex(\dissection_\circ)$ of any reference dissection~$\dissection_\circ$. Our presentation slightly differs from their's as they use a compatibility condition on the dual tree of the dissection~$\dissection_\circ$, but the simplicial complex is the same. In this context, they settled F.~Chapoton's lattice question, using lattice quotients of a lattice of biclosed sets. In a paper of T.~Manneville and V.~Pilaud~\cite{MannevillePilaud-geometricRealizationsAccordionComplexes}, geometric realizations (as fans and convex polytopes) of~$\accordionComplex(\dissection_\circ)$ were given for any reference dissection~$\dissection_\circ$, providing in particular an answer to F.~Chapoton's geometric question. The present paper settles the serpent nest conjecture of F.~Chapoton, in a version extended to any accordion complex. Other enumerative conjectures involving a statistic called~\emph{$F$-triangle}  are proposed in~\cite{Chapoton-quadrangulations}. A proof that this statistic is preserved by the~\emph{twist} operation~\cite[Conjecture 2.6]{Chapoton-quadrangulations} can be found in~\cite[Section 8.3.2]{Manneville-thesis}, but this result should go together with others that remain open for the moment.

\subsection{Overview}
\label{subsec:overview}

Section~\ref{sec:accordionComplex} introduces the accordion complex of a dissection~$\dissection_\circ$. We follow the presentation already adopted in~\cite{MannevillePilaud-geometricRealizationsAccordionComplexes}, where the definitions and arguments of A.~Garver and T.~McConville~\cite{GarverMcConville} are adapted to work directly on the dissection~$\dissection_\circ$ rather than on its dual graph. We define serpent nests in Section~\ref{sec:serpentNestConjecture} and present there our bijection between the facets of~$\accordionComplex(\dissection_\circ)$ and the serpent nests of~$\dissection_\circ$.
\newpage
\section{Accordion dissections}
\label{sec:accordionComplex}

\begin{figure}[b]
	\centerline{\includegraphics[width=1.15\textwidth]{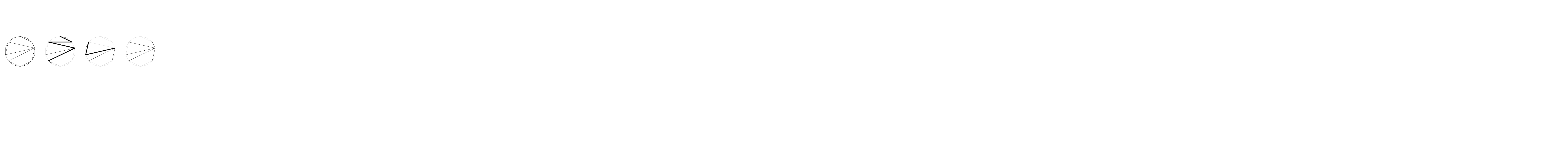}}
	\caption{A dissection (left) and three accordions with bold zigzags (middle and right).}
	\label{fig:exmAccordions}
\end{figure}

By a~\defn{diagonal} of a convex polygon~$\polygon$, we mean either an internal diagonal or an external diagonal (boundary edge) of~$\polygon$, but a~\defn{dissection}~$\dissection$ of~$\polygon$ is a set of pairwise noncrossing~\emph{internal} diagonals of~$\polygon$. We denote diagonals as pairs~$(i,j)$ of vertices, with~$i\le j$ when the labels on vertices are ordered. We moreover denote by~$\bar\dissection$ the dissection~$\dissection$ together with all boundary edges of~$\polygon$. The \defn{cells} of~$\dissection$ are the bounded connected components of the plane minus the diagonals of~$\bar\dissection$. An~\defn{accordion} of~$\dissection$ is a subset of~$\bar\dissection$ which contains either no or two incident diagonals in each cell of~$\dissection$. A~\defn{subaccordion} of~$\dissection$ is a subset of~$\dissection$ formed by the diagonals between two given internal diagonals in an accordion of~$\dissection$. A~\defn{zigzag} of~$\dissection$ is a subset~$\{\delta_0, \dots, \delta_{p+1}\}$ of~$\dissection$ where~$\delta_i$ shares distinct endpoints with~$\delta_{i-1}$ and~$\delta_{i+1}$ and separates them for any~$i \in [p]$. The~\defn{zigzag} of an accordion~$\accordion$ is the subset of the diagonals of~$\accordion$ which disconnect~$\accordion$. Notice that accordions of~$\dissection$ contain boundary edges of~$\polygon$, but not subaccordions nor zigzags. See \fref{fig:exmAccordions} for illustrations.

Consider $2n$ points on the unit circle labeled clockwise by~$1_\circ, 2_\bullet, 3_\circ, 4_\bullet, \dots, (2n-1)_\circ, (2n)_\bullet$ (with labels meant modulo~$2n$). We say that~$1_\circ, \dots, (2n-1)_\circ$ are the~\defn{hollow vertices} while~$2_\bullet, \dots, (2n)_\bullet$ are the~\defn{solid vertices}. The~\defn{hollow polygon} is the convex hull~$\polygon_\circ$ of~$1_\circ, \dots, {(2n-1)_\circ}$ while the~\defn{solid polygon} is the convex hull~$\polygon_\bullet$ of~$2_\bullet, \dots, (2n)_\bullet$. We simultaneously consider~\defn{hollow diagonals}~$\delta_\circ$ (with two hollow vertices) and~\defn{solid diagonals}~$\delta_\bullet$ (with two solid vertices), but never consider diagonals with vertices of each kind. Similarly, we consider~\defn{hollow dissections}~$\dissection_\circ$ (with only hollow diagonals) and~\defn{solid dissections}~$\dissection_\bullet$ (with only solid diagonals), but never mix hollow and solid diagonals in a dissection. To distinguish them more easily, hollow (resp. solid) vertices and diagonals appear red (resp. blue) in all pictures.

Let~$\dissection_\circ$ be an arbitrary reference hollow dissection. A~\defn{$\dissection_\circ$-accordion diagonal} is a solid diagonal~$\delta_\bullet$ such that the hollow diagonals of~$\bar\dissection_\circ$ crossed by~$\delta_\bullet$ form an accordion of~$\dissection_\circ$. In other words, $\delta_\bullet$ cannot enter and exit a cell of~$\dissection_\circ$ using two nonincident diagonals. It is also clearly equivalent to just require that~$\delta_\bullet$ crosses a connected subset of diagonals of~$\bar\dissection_\circ$. Note for instance that for any hollow diagonal~$(i_\circ, j_\circ) \in \bar\dissection_\circ$, the solid diagonals~$((i-1)_\bullet, (j-1)_\bullet)$ and~$((i+1)_\bullet, (j+1)_\bullet)$ are~$\dissection_\circ$-accordion diagonals. In particular, all boundary edges of~$\polygon_\bullet$ are $\dissection_\circ$-accordion diagonals. A~\defn{$\dissection_\circ$-accordion dissection} is a set of pairwise noncrossing internal $\dissection_\circ$-accordion diagonals, and we call~\defn{accordion complex} of~$\dissection_\circ$ the simplicial complex~$\accordionComplex(\dissection_\circ)$ of $\dissection_\circ$-accordion dissections.

\begin{example}
Consider the reference dissection~$\dissection_\circ^{\ex}$ of~\fref{fig:exmAccordionDissections}\,(left). Examples of (inclusion) maximal~$\dissection_\circ^{\ex}$-accordion dissections are given in \fref{fig:exmAccordionDissections}\,(middle right and right).

\begin{figure}
	\centerline{\includegraphics[width=1.2\textwidth]{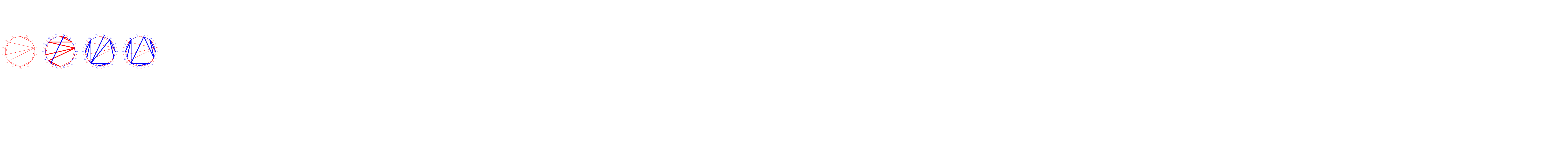}}
	\caption{A hollow dissection~$\dissection_\circ^{\ex}$, a solid~$\dissection_\circ^{\ex}$-accordion diagonal whose corresponding hollow accordion is bold, and two maximal solid~$\dissection_\circ^{\ex}$-accordion dissections.}
	\label{fig:exmAccordionDissections}
	\vspace{-.5cm}
\end{figure}

\end{example}

\begin{remark}
\label{rem:specialReferenceDissections}
Special reference hollow dissections~$\dissection_\circ$ give rise to special accordion complexes~$\accordionComplex(\dissection_\circ)$:
\begin{compactenum}[(i)]
\item If~$\dissection_\circ$ is the empty dissection with the whole hollow polygon as unique cell, then the $\dissection_\circ$-accordion complex~$\accordionComplex(\dissection_\circ)$ is reduced to the empty $\dissection_\circ$-accordion dissection.
\item If~$\dissection_\circ$ has a unique diagonal, then the accordion complex~$\accordionComplex(\dissection_\circ)$ is a segment.
\item For any hollow triangulation~$\triangulation_\circ$, all solid diagonals are $\triangulation_\circ$-accordions, so that the~$\triangulation_\circ$-accordion complex~$\accordionComplex(\triangulation_\circ)$ is the simplicial associahedron.
\item For any hollow quadrangulation~$\quadrangulation_\circ$, a solid diagonal is a $\quadrangulation_\circ$-accordion if and only if it never crosses two opposite edges of a quadrangle of~$\quadrangulation_\circ$, so that the accordion complex~$\accordionComplex(\quadrangulation_\circ)$ is the Stokes complex defined by Y.~Baryshnikov~\cite{Baryshnikov} and studied by F.~Chapoton~\cite{Chapoton-quadrangulations}.
\end{compactenum}
\end{remark}

We recall that the~\defn{dual tree}~$\dissection_\circ^\dual$ of the dissection~$\dissection_\circ$ is the tree whose nodes are the cells of~$\dissection_\circ$ and whose edges connects pairs of cells sharing a common diagonal of~$\dissection_\circ$. In particular the edges of~$\dissection_\circ^\dual$ are naturally identified to the diagonals of~$\dissection_\circ$.
Following the original definition of the noncrossing complex of A.~Garver and T.~McConville~\cite{GarverMcConville}, the accordion complex could equivalently be defined in terms of the dual tree~$\dissection_\circ^\dual$ of~$\dissection_\circ$.

\begin{remark}
\label{rem:reduction}
Assume that~$\dissection_\circ$ has a cell~$\cell_\circ$ containing~$p$ boundary edges of the hollow polygon~$\polygon_\circ$. Let~$\cell_\circ^1, \dots, \cell_\circ^p$ denote the $p$ (possibly empty) connected components of the hollow polygon minus~$\cell_\circ$. For~$i \in [p]$, let~$\dissection_\circ^i$ denote the dissection formed by the cell~$\cell_\circ$ together with the cells of~$\dissection_\circ$ in~$\cell_\circ^i$. Since no $\dissection_\circ$-accordion can contain internal diagonals crossing diagonals of distinct dissections~$\dissection_\circ^i$ and~$\dissection_\circ^j$ (with~$i \ne j$), the accordion complex of~$\dissection_\circ$ decomposes as the join:~${\accordionComplex(\dissection_\circ) = \accordionComplex(\dissection_\circ^1) * \cdots * \accordionComplex(\dissection_\circ^p)}$. In particular, we have the following reduction:
if a nontriangular cell of~$\dissection_\circ$ has two consecutive boundary edges~$\gamma_\circ, \delta_\circ$ of the hollow polygon, then contracting~$\gamma_\circ$ and~$\delta_\circ$ to a single boundary edge preserves the accordion complex of~$\dissection_\circ$.
\end{remark}

\begin{remark}
\label{rem:links}
The links in an accordion complex are joins of accordion complexes. Namely, consider a $\dissection_\circ$-accordion dissection~$\dissection_\bullet$ with cells~$\cell_\bullet^1, \dots, \cell_\bullet^p$. Let~$\dissection_\circ^i$ denote the hollow dissection obtained from~$\dissection_\circ$ by contracting all hollow (internal and external) diagonals which do not cross an edge of~$\cell_\bullet^i$. Then the link of~$\dissection_\bullet$ in~$\accordionComplex(\dissection_\circ)$ is clearly isomorphic to the join~$\accordionComplex(\dissection_\circ^1) * \dots * \accordionComplex(\dissection_\circ^p)$. \fref{fig:bijectionDissectionsSerpentNests} (left and middle left) illustrates how to visualize the link of two~$\dissection_\circ$-accordion diagonals.
\end{remark}

A bunch of combinatorial and geometric properties of accordion complexes were studied in~\cite{GarverMcConville,MannevillePilaud-geometricRealizationsAccordionComplexes,Manneville-thesis}. However we skip their presentation for sake of conciseness, as it appears that the proof of our result only relies on Remark~\ref{rem:links} (and on Remark~\ref{rem:reduction} for convenience).

\section{The serpent nest conjecture}
\label{sec:serpentNestConjecture}

For arbitrary vertices represented by residues modulo~$2n$, we mean by~$u<v<w$ that~$u,v$ and~$w$ are positioned in this order in clockwise cyclic order. In particular~$u<v<w$ is equivalent to~$w<u<v$ and~$v<w<u$. We also denote cyclic intervals by~$[u,w]\eqdef\{v\,|\,u\le v \le w\}$, and cyclic hollow (resp. solid) intervals by~$[u_\circ,w_\circ]_\circ\eqdef\{v_\circ\in\{1_\circ,\dots,(2n-1)_\circ\}\,|\,u_\circ\le v_\circ \le w_\circ\}$ (resp.~$[u_\bullet,w_\bullet]_\bullet\eqdef\{v_\bullet\in\{1_\bullet,\dots,(2n-1)_\bullet\}\,|\,u_\bullet\le v_\bullet \le w_\bullet\}$), where weak comparison symbols are extended accordingly to the previous notation. Finally we keep the notation~$u<v$ between residues modulo~$2n$ to denote the corresponding relation between their representatives in~$[2n]$.

\subsection{Serpents and serpent nests}
\label{subsec:serpentsAndSerpentNests}

We focus now on objects called~\emph{serpent nests} in~\cite{Chapoton-quadrangulations}. Recall that we denote by~$\dissection_\circ^\dual$ the dual tree of a reference hollow dissection~$\dissection_\circ$, whose vertices are the cells of~$\dissection_\circ$ and whose edges are the pairs of cells of~$\dissection_\circ$ that share a common diagonal of~$\dissection_\circ$. From now on we identify the edges of~$\dissection_\circ^\dual$ with the diagonals of~$\dissection_\circ$ in the natural way. A~\defn{serpent} of~$\dissection_\circ$ is an nonempty undirected dual path~$\serpent$ in~$\dissection_\circ^\dual$ whose edges (considered as hollow diagonals of~$\dissection_\circ$) form a subaccordion of~$\dissection_\circ$. Informally~$\serpent$ is a path in~$\dissection_\circ$ going through cells of~$\dissection_\circ$ by incident diagonals. The edges of~$\serpent$ not disconnecting it as a path (its ``end edges'') are its~\defn{final edges} (see Figure~\ref{fig:exmSerpentNest} left for an illustration).

\begin{figure}
\centerline{\includegraphics[height=6cm]{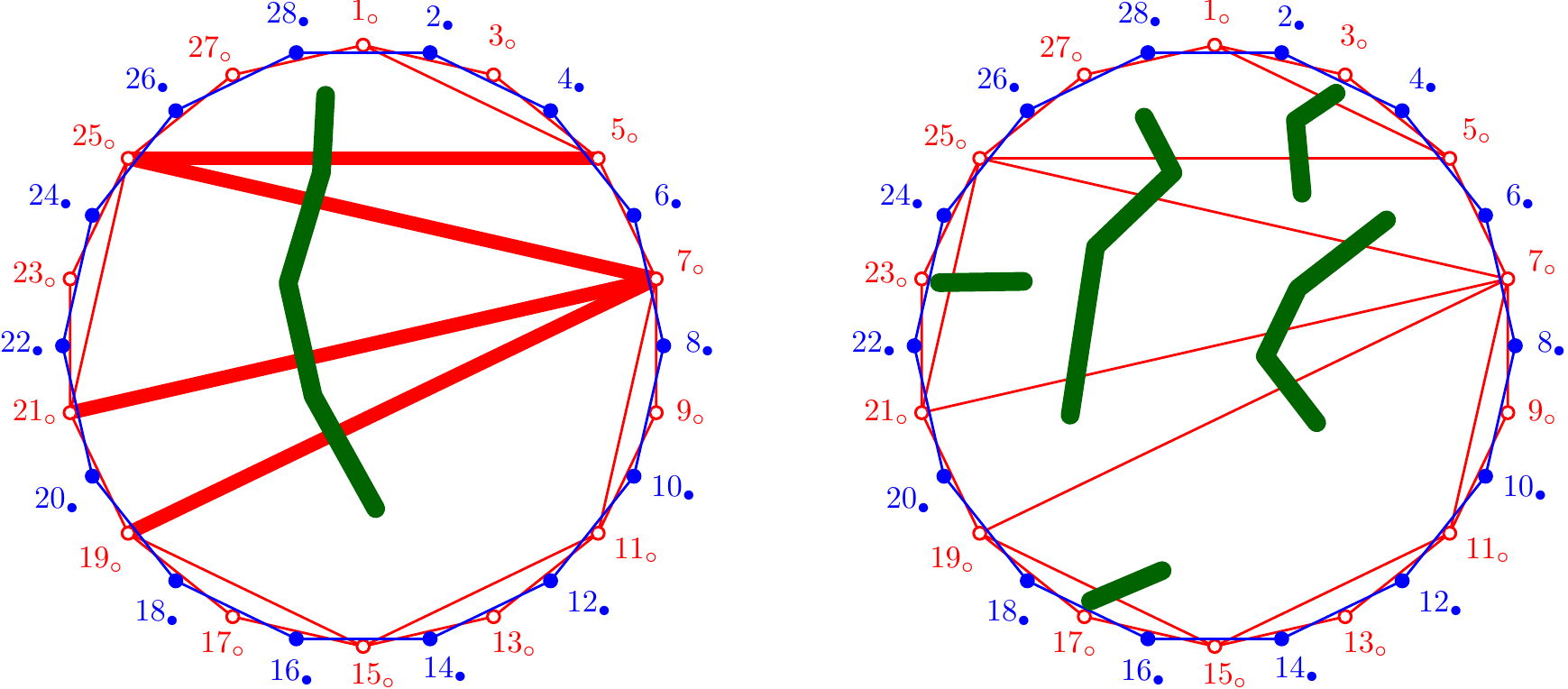}}
\caption{A (dark green) serpent~$\serpent$ with final edges~$(5_\circ,25_\circ)$ and~$(7_\circ,19_\circ)$ in the hollow dissection~$\dissection_\circ^{\ex}$ (left, the subaccordion crossed by~$\serpent$ is bold), and a serpent nest in~$\dissection_\circ^{\ex}$ (right).}
\label{fig:exmSerpentNest}
\vspace{-.4cm}
\end{figure}

Two serpents~$\serpent_1,\serpent_2$ are~\defn{incompatible} if they share at least one edge, so that the serpent~$\serpent_1\cap\serpent_2$ has for final edges diagonals~$(u_\circ^h,v_\circ^h)$ and~$(u_\circ^t,v_\circ^t)$ of~$\dissection_\circ$ with~$u_\circ^h< v_\circ^h\le u_\circ^t< v_\circ^t$, and if they satisfy either of the following conditions, where~$\serpent_1$ and~$\serpent_2$ (resp~$(u_\circ^h,v_\circ^h)$ and~$(u_\circ^t,v_\circ^t)$) may be exchanged.
\begin{enumerate}
 \item \label{cond:serpentNestHeads} The serpents~$\serpent_1$ and~$\serpent_2$ have a common final edge (Figure~\ref{fig:conditionsSerpentNest} left).
 \item \label{cond:serpentNestPatternLong1} The serpents~$\serpent_1$ and~$\serpent_2$ ``cross''. Formally~$\serpent_1$ simultaneously contains two diagonals incident to~$u_\circ^h$ and two diagonals incident to~$u_\circ^t$, and~$\serpent_2$ simultaneously contains two diagonals incident to~$v_\circ^h$ and two diagonals incident to~$v_\circ^t$ (Figure~\ref{fig:conditionsSerpentNest}~\mbox{middle}~left).
 \item \label{cond:serpentNestPatternLong2} The diagonal~$(u_\circ^h,v_\circ^h)$ is a final edge of the serpent~$\serpent_2$, and the serpent~$\serpent_1$ simultaneously contains two diagonals of~$\dissection_\circ$ incident to~$u_\circ^h$ (resp.~$v_\circ^h$) and two diagonals of~$\dissection_\circ$ incident to~$u_\circ^t$ (resp.~$v_\circ^t$) (Figure~\ref{fig:conditionsSerpentNest} middle right).
 \item \label{cond:serpentNestPatternShort} The diagonal~$(u_\circ^h,v_\circ^h)$ is a final edge of~$\serpent_1$, the diagonal~$(u_\circ^t,v_\circ^t)$ is a final edge of~$\serpent_2$, $\serpent_1$ contains two diagonals incident to~$u_\circ^t$ (resp.~$v_\circ^t$), and~$\serpent_2$ contains two diagonals incident to~$v_\circ^h$ (resp.~~$u_\circ^t$) (Figure~\ref{fig:conditionsSerpentNest} right).
\end{enumerate}

\begin{figure}[b]
\centerline{\begin{overpic}[width=1.2\textwidth]{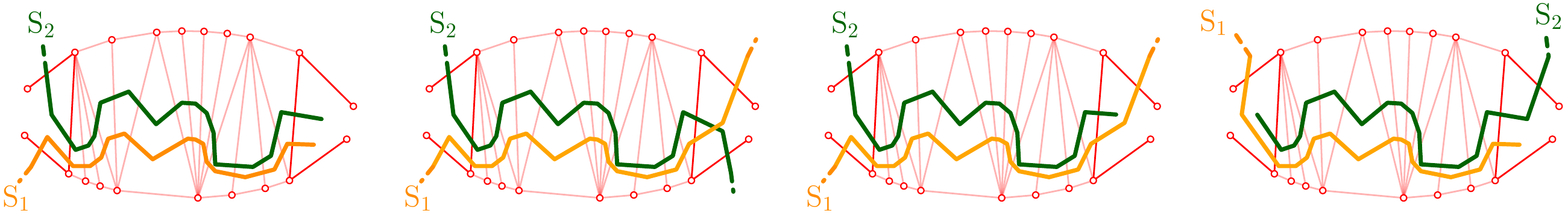}
\put(18.5,10.6){\textcolor{red}{$u_\circ^h$}}
\put(18,-.3){\textcolor{red}{$v_\circ^h$}}
\put(3.2,-0.1){\textcolor{red}{$u_\circ^t$}}
\put(4,11){\textcolor{red}{$v_\circ^t$}}
\put(44.3,10.6){\textcolor{red}{$u_\circ^h$}}
\put(43.2,-.3){\textcolor{red}{$v_\circ^h$}}
\put(28.5,-0.1){\textcolor{red}{$u_\circ^t$}}
\put(29.5,11){\textcolor{red}{$v_\circ^t$}}
\put(70.1,10.6){\textcolor{red}{$u_\circ^h$}}
\put(69,-.3){\textcolor{red}{$v_\circ^h$}}
\put(54.3,-0.1){\textcolor{red}{$u_\circ^t$}}
\put(55.3,11){\textcolor{red}{$v_\circ^t$}}
\put(95.5,10.6){\textcolor{red}{$u_\circ^h$}}
\put(95,-.3){\textcolor{red}{$v_\circ^h$}}
\put(80,-0.1){\textcolor{red}{$u_\circ^t$}}
\put(81.1,11){\textcolor{red}{$v_\circ^t$}}
\end{overpic}}
\vspace{.1cm}

\caption{Pairs of serpents~$\serpent_1$ (light yellow) and~$\serpent_2$ (dark green) incompatible because of Condition~\ref{cond:serpentNestHeads} (left), Condition~\ref{cond:serpentNestPatternLong1} (middle left), Condition~\ref{cond:serpentNestPatternLong2} (middle right) and Condition~\ref{cond:serpentNestPatternShort} (right).}
\label{fig:conditionsSerpentNest}
\vspace{-.4cm}
\end{figure}

The serpents~$\serpent_1$ and~$\serpent_2$ are~\defn{compatible} if they are not incompatible and a~\defn{serpent nest} of~$\dissection_\circ$ is a (potentially empty) set of pairwise compatible serpents (see Figures~\ref{fig:exmSerpentNest} left). Informally a set of serpents is a serpent nest if all its serpents can be simultaneously drawn as pairwise noncrossing dual paths in~$\dissection_\circ$ (Figure~\ref{fig:conditionsSerpentNest} middle left), with the additional conditions that no two of them ``end up in the same cell by entering it through a same diagonal of~$\dissection_\circ$'' (Figure~\ref{fig:conditionsSerpentNest} left) and that no serpent ``goes over the head of another serpent'' (Figure~\ref{fig:conditionsSerpentNest} middle right and right). To see that this description is indeed equivalent to the actual definition, observe that a serpent nest induces a unique valid\footnote{Informally a pattern ``avoiding'' Conditions~\ref{cond:serpentNestHeads},~\ref{cond:serpentNestPatternLong1} ,~\ref{cond:serpentNestPatternLong2} and~\ref{cond:serpentNestPatternShort}.} local pattern at each side of each internal diagonal of~$\dissection_\circ$, which immediately describes how to suitably draw all serpents. Figure~\ref{fig:serpentNestPatterns} (left) illustrates what such a local valid pattern typically looks like while Figure~\ref{fig:serpentNestPatterns} (middle and right) describes the forbidden local patterns.

\begin{figure}
\vspace{.4cm}

\centerline{\begin{overpic}[width=1.25\textwidth]{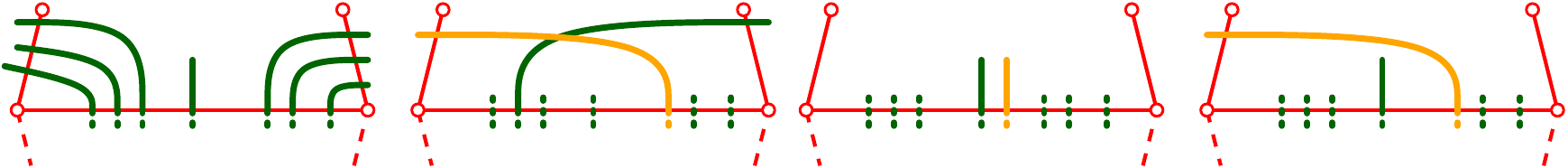}
\put(11.5,10){\textcolor{red}{$\cell_\circ$}}
\put(36,10){\textcolor{red}{$\cell_\circ$}}
\put(61,10){\textcolor{red}{$\cell_\circ$}}
\put(87,10){\textcolor{red}{$\cell_\circ$}}
\put(10,1.8){\textcolor{red}{$\delta_\circ$}}
\put(35.6,1.8){\textcolor{red}{$\delta_\circ$}}
\put(60,1.8){\textcolor{red}{$\delta_\circ$}}
\put(86,1.8){\textcolor{red}{$\delta_\circ$}}
\end{overpic}}
\caption{A valid serpent nest pattern at an internal diagonal~$\delta_\circ$ of a cell~$\cell_\circ$ in a hollow dissection (left) and the three obstructions to valid patterns (middle and right).}
\label{fig:serpentNestPatterns}
\vspace{-.5cm}
\end{figure}

\subsection{Main result}

The serpent nest conjecture was initially stated by F.~Chapoton only for reference hollow quadrangulations. Let us first rephrase it in our setting.

\begin{conjecture}{{\cite[Conjecture 45 for $x=y=1$]{Chapoton-quadrangulations}}}
For any hollow quadrangulation~$\quadrangulation_\circ$, there is a bijection between the serpent nests of~$\quadrangulation_\circ$ and the maximal~$\quadrangulation_\circ$-accordion dissections.
\end{conjecture}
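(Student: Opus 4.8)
The plan is to prove something stronger than the conjecture as stated: a bijection between serpent nests of $\dissection_\circ$ and \emph{maximal} $\dissection_\circ$-accordion dissections for \emph{any} reference hollow dissection, which specializes to the quadrangulation case. The natural strategy is to set up an explicit map $\bijectionDissectionsToSerpentNests{\dissection_\circ}$ sending a maximal accordion dissection $\dissection_\bullet$ to a serpent nest, together with an inverse $\bijectionSerpentNestsToDissections{\dissection_\circ}$, and to verify that each is well-defined and that they are mutually inverse. The key geometric observation to exploit is the duality between the solid diagonals (accordion diagonals) and the hollow diagonals: a solid accordion diagonal $\delta_\bullet$ crosses a connected run of hollow diagonals, i.e.\ a subaccordion, which is exactly the data of a dual path in $\dissection_\circ^\dual$, namely a serpent. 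So to each accordion diagonal one can associate a serpent (roughly, the serpent ``dual'' to the diagonal), and the claim is that a \emph{maximal} collection of pairwise noncrossing accordion diagonals produces exactly a \emph{maximal-information} collection of serpents that is pairwise compatible, i.e.\ a serpent nest.

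First I would make the correspondence at the level of single diagonals precise: given a solid $\dissection_\circ$-accordion diagonal $\delta_\bullet$ with endpoints $(i_\bullet,j_\bullet)$, the hollow diagonals of $\bar\dissection_\circ$ it crosses form a subaccordion, and its final hollow diagonals are determined by the two cells of $\dissection_\circ$ where $\delta_\bullet$ enters and exits; this produces a serpent $\serpent(\delta_\bullet)$, and I would track how the parity shift between solid and hollow vertices (the $\pm 1$ offsets noted in the definition of accordion diagonals) pins down the final edges precisely. Next I would translate the crossing condition: two accordion diagonals $\delta_\bullet,\delta'_\bullet$ are noncrossing in $\polygon_\bullet$ if and only if the associated serpents $\serpent,\serpent'$ are compatible in the sense of Conditions~\ref{cond:serpentNestHeads}--\ref{cond:serpentNestPatternShort}. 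This is the heart of the argument: the four incompatibility conditions should correspond bijectively to the four ways two solid chords can cross given the hollow data, which is exactly what the local-pattern reformulation (Figure~\ref{fig:serpentNestPatterns}) is designed to encode. I would lean on Remark~\ref{rem:links}, which says that the link of an accordion dissection is a join of smaller accordion complexes obtained by contraction, to run an induction on the number of cells of $\dissection_\circ$: adding one diagonal to $\dissection_\circ$ (or peeling one off via Remark~\ref{rem:reduction}) lets me reduce the global compatibility check to the local pattern at a single internal hollow diagonal.

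Concretely, for surjectivity and injectivity I would argue by induction using the join decompositions: the base cases are exactly the degenerate reference dissections of Remark~\ref{rem:specialReferenceDissections}, where both sides are trivially a point or a segment, and the inductive step matches the join $\accordionComplex(\dissection_\circ^1)*\cdots*\accordionComplex(\dissection_\circ^p)$ on the accordion side with the corresponding decomposition of serpent nests, since serpents living in different ``branches'' of $\dissection_\circ^\dual$ are automatically compatible. The inverse map $\bijectionSerpentNestsToDissections{\dissection_\circ}$ would reconstruct, from a serpent nest $\serpentNest$, one accordion diagonal per serpent by reading off its two final edges and drawing the unique solid chord crossing that subaccordion; checking that the resulting chords are pairwise noncrossing is precisely the reverse translation of the compatibility conditions, and checking maximality amounts to a dimension/cardinality count that I expect to follow from the fact that both maximal accordion dissections and maximal serpent nests have size equal to the number of internal diagonals of $\dissection_\circ$.

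The hard part will be the faithful dictionary between the four incompatibility conditions and the four crossing patterns of solid chords. The conditions are asymmetric (some refer to shared final edges, some to one serpent ``going over the head'' of another), and getting the $u_\circ^h<v_\circ^h\le u_\circ^t<v_\circ^t$ orderings to line up with the clockwise cyclic positions of the solid endpoints $(i_\bullet,j_\bullet)$, $(i'_\bullet,j'_\bullet)$ will require a careful case analysis that keeps track of which of the two final edges of each serpent is the ``head'' and which is the ``tail,'' and of exactly which incident diagonals each serpent contains at $u_\circ^{h/t}$ and $v_\circ^{h/t}$. I expect the local-pattern picture to do most of the bookkeeping, reducing a potentially large global case analysis to a finite check at a single diagonal, but verifying that no valid local pattern is missed and that the forbidden patterns in Figure~\ref{fig:serpentNestPatterns} exhaust the crossing obstructions is where the real work lies.
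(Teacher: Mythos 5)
Your core idea --- send each solid accordion diagonal~$\delta_\bullet$ to the serpent formed by the hollow diagonals it crosses, and translate ``noncrossing'' into ``compatible'' --- cannot work, and the obstruction is structural, not a matter of finishing the case analysis. The bijection in the statement is between maximal accordion dissections (which all have the \emph{same} number of diagonals) and \emph{all} serpent nests (which have \emph{varying} cardinality, including the empty nest); so no diagonal-by-diagonal, serpent-per-diagonal dictionary can be a bijection. Concretely, take~$n=4$ and~$\dissection_\circ=\{(1_\circ,5_\circ)\}$: the accordion complex is a segment (Remark~\ref{rem:specialReferenceDissections}\,(ii)), whose two facets are~$\{(2_\bullet,6_\bullet)\}$ and~$\{(4_\bullet,8_\bullet)\}$. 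Both of these solid diagonals cross exactly the hollow diagonal~$(1_\circ,5_\circ)$, hence both would be sent by your map to the same one-serpent nest, while the empty serpent nest (which is a legitimate serpent nest, and must be hit, since there are exactly two serpent nests here) has no preimage. Your map is neither injective nor surjective already in the smallest nontrivial example. Two further symptoms of the same problem: a solid accordion diagonal can cross \emph{no} internal hollow diagonal at all (it only crosses boundary edges of~$\polygon_\circ$), in which case your ``associated serpent'' is empty, which is not a serpent; and your final cardinality claim speaks of ``maximal serpent nests,'' a notion that does not appear in the statement --- the conjecture counts all serpent nests, and they are not equinumerous with the diagonals of~$\dissection_\circ$.

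The paper's proof is organized quite differently, precisely to get around this mismatch: it is an induction on the number of diagonals of~$\dissection_\circ$, pivoting on a triangular leaf cell with internal diagonal~$(1_\circ,5_\circ)$ (Remark~\ref{rem:reduction} justifies this normalization). Lemma~\ref{lem:inductionTriangle} shows that every maximal~$\dissection_\circ$-accordion dissection~$\dissection_\bullet$ contains a unique pair~$(2_\bullet,x_\bullet),(4_\bullet,x_\bullet)$ in~$\bar\dissection_\bullet$; this vertex~$x_\bullet$ splits the polygon into two smaller reference dissections~$\dissection_\circ^{>x_\bullet}$ and~$\dissection_\circ^{<x_\bullet}$ (via Remark~\ref{rem:links}), the bijection is applied inductively in each part, the two resulting serpent nests are ``unfolded'' into~$\dissection_\circ$, and then \emph{at most one} additional serpent is appended --- its final edges are~$(1_\circ,5_\circ)$ and the farthest diagonal along the zigzag crossed by~$(2_\bullet,x_\bullet)$ that keeps the whole family compatible (no serpent at all is added in the degenerate case~$x_\bullet=(2n)_\bullet$, which is how the empty nest arises). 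The inverse map recovers~$x_\bullet$ from a serpent nest by an explicit walk along diagonals~$\gamma_\circ^i$ testing compatibility at each step. So the information carried by a facet is encoded globally through the recursive splitting, not locally one serpent per diagonal; any correct proof must account for this, and yours, as designed, cannot.
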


We prove this conjecture for arbitrary reference hollow dissections.

\begin{theorem}
\label{thm:bijectionDissectionsSerpentNests}
For any hollow dissection~$\dissection_\circ$, there is a bijection between the serpent nests of~$\dissection_\circ$ and the maximal~$\dissection_\circ$-accordion dissections.
\end{theorem}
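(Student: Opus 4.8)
The plan is to prove Theorem~\ref{thm:bijectionDissectionsSerpentNests} by exhibiting two explicit mutually inverse maps, $\bijectionDissectionsToSerpentNests{\dissection_\circ}$ sending a maximal $\dissection_\circ$-accordion dissection to a serpent nest and $\bijectionSerpentNestsToDissections{\dissection_\circ}$ going the other way, and then checking that they are well defined and inverse to one another by induction on the number of internal diagonals of $\dissection_\circ$. The key preliminary observation I would record is that all facets have the same size: a maximal $\dissection_\circ$-accordion dissection has exactly $|\dissection_\circ|$ internal diagonals (this matches the triangulation and single-diagonal cases of Remark~\ref{rem:specialReferenceDissections}), so that the bijection cannot be a serpent-by-diagonal matching but must instead compare each $\dissection_\bullet$ against a fixed reference.

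Accordingly I would first fix a canonical \emph{default} maximal accordion dissection, namely the one formed by the clockwise shifts $((i+1)_\bullet,(j+1)_\bullet)$ of the diagonals of $\bar\dissection_\circ$ (these are $\dissection_\circ$-accordion diagonals by the remark preceding Remark~\ref{rem:specialReferenceDissections}, they are pairwise noncrossing as a rotation of $\dissection_\circ$, and they number $|\dissection_\circ|$, hence are maximal), and declare it to be the image of the empty serpent nest under $\bijectionSerpentNestsToDissections{\dissection_\circ}$. The map $\bijectionDissectionsToSerpentNests{\dissection_\circ}$ then reads off, at each side of each internal diagonal $\delta_\circ$ of $\dissection_\circ$, the local pattern traced by the diagonals of $\dissection_\bullet$ crossing $\delta_\circ$ (compare Figure~\ref{fig:serpentNestPatterns}): a diagonal of $\dissection_\bullet$ that departs from the default opens a serpent whose body is exactly the subaccordion it crosses and whose final edges record where it leaves and rejoins the default. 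The inverse map does the reverse: as noted in the discussion around Figure~\ref{fig:serpentNestPatterns}, a serpent nest induces a unique valid local pattern at each side of each internal diagonal, and gluing these local choices reconstructs a set of solid diagonals, the toggled subaccordions replacing default diagonals by the same number of diagonals so that the result again has $|\dissection_\circ|$ diagonals.

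The two substantive points are well-definedness and bijectivity. For well-definedness I would show that the serpents produced by $\bijectionDissectionsToSerpentNests{\dissection_\circ}$ are pairwise compatible: two of them can fail to be compatible only through one of Conditions~\ref{cond:serpentNestHeads}--\ref{cond:serpentNestPatternShort}, and each such configuration translates, under the correspondence between a serpent and the solid diagonal crossing its body, into a crossing of the two corresponding diagonals of $\dissection_\bullet$, contradicting that $\dissection_\bullet$ is a dissection; symmetrically, the valid noncrossing local patterns of a serpent nest reconstruct pairwise noncrossing diagonals. For bijectivity I would induct on $|\dissection_\circ|$, the base cases being Remark~\ref{rem:specialReferenceDissections}(i) (empty dissection: the only serpent nest and the only maximal accordion dissection are both empty) and Remark~\ref{rem:specialReferenceDissections}(ii) (a single diagonal $\delta_\circ$: the two facets correspond to the empty nest and to the unique serpent whose body is $\{\delta_\circ\}$). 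For the inductive step I would apply Remark~\ref{rem:reduction} to assume $\dissection_\circ$ has no nontriangular cell with two consecutive boundary edges (this leaves $\dissection_\circ^\dual$, hence the set of serpents, unchanged and splits off matching joins on both sides), fix a well-chosen accordion diagonal $\delta_\bullet$, and invoke Remark~\ref{rem:links} to write the link of $\delta_\bullet$ as a join $\accordionComplex(\dissection_\circ^1)*\accordionComplex(\dissection_\circ^2)$ of two strictly smaller accordion complexes; I would then check that the serpent nests whose reconstruction uses $\delta_\bullet$ factor as products of serpent nests of $\dissection_\circ^1$ and $\dissection_\circ^2$ and close the induction.

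The hardest part will be the exact dictionary between the four incompatibility conditions and crossings of solid diagonals: one must verify that Conditions~\ref{cond:serpentNestHeads}--\ref{cond:serpentNestPatternShort} are \emph{precisely} the configurations forcing two reconstructed diagonals to cross, with neither spurious nor missing cases, and that this dictionary is consistent with the choice of default and with the way final edges are recorded at the heads and tails of serpents (the role of the asymmetry between Conditions~\ref{cond:serpentNestPatternLong2} and~\ref{cond:serpentNestPatternShort} being the most delicate). Equally delicate is checking that the purely local patterns glue into one globally consistent drawing and that fixing $\delta_\bullet$ partitions serpent nests and maximal dissections cleanly along the join of Remark~\ref{rem:links}; this bookkeeping of which diagonals are shared with the default is where I expect the technical weight of the argument to lie.
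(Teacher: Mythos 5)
Your central construction does not survive its smallest test case, and the failure is structural rather than cosmetic. Consider the fan triangulation~$\triangulation_\circ=\{(1_\circ,5_\circ),(1_\circ,7_\circ),(1_\circ,9_\circ)\}$ of the hollow hexagon, so that~$\accordionComplex(\triangulation_\circ)$ is the associahedron (Remark~\ref{rem:specialReferenceDissections}\,(iii)) and every solid diagonal is an accordion diagonal, and take the facet~$\dissection_\bullet=\{(4_\bullet,8_\bullet),(4_\bullet,10_\bullet),(4_\bullet,12_\bullet)\}$. None of its diagonals belongs to your default~$\{(2_\bullet,6_\bullet),(2_\bullet,8_\bullet),(2_\bullet,10_\bullet)\}$, and the subaccordions they cross are respectively~$\{(1_\circ,5_\circ),(1_\circ,7_\circ)\}$, $\{(1_\circ,5_\circ),(1_\circ,7_\circ),(1_\circ,9_\circ)\}$ and again~$\{(1_\circ,5_\circ),(1_\circ,7_\circ),(1_\circ,9_\circ)\}$. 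So your rule ``one serpent per non-default diagonal, with body the crossed subaccordion'' assigns the same serpent to two distinct diagonals, and the two distinct serpents it does produce share the final edge~$(1_\circ,5_\circ)$, hence are incompatible by Condition~\ref{cond:serpentNestHeads}: the output is not a serpent nest, and injectivity is hopeless. The same example refutes your proposed dictionary between incompatibility and crossing: $(4_\bullet,8_\bullet)$ and~$(4_\bullet,10_\bullet)$ do not cross (they share the endpoint~$4_\bullet$), yet their serpents violate Condition~\ref{cond:serpentNestHeads}; indeed Conditions~\ref{cond:serpentNestHeads}, \ref{cond:serpentNestPatternLong2} and~\ref{cond:serpentNestPatternShort} concern serpents overlapping along edges and are not crossing phenomena at all. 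Nor can your clause about final edges ``recording where the diagonal leaves and rejoins the default'' repair this: a serpent is by definition a dual path, so its final edges are determined by its edge set and cannot be prescribed independently. (Separately, your preliminary claim that all facets have exactly~$|\dissection_\circ|$ diagonals is true but is a nontrivial theorem of A.~Garver and T.~McConville that you assert rather than prove; the paper deliberately avoids relying on it.)

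The inductive scheme is also incomplete as sketched: restricting attention to the link of one ``well-chosen'' diagonal~$\delta_\bullet$ via Remark~\ref{rem:links} only accounts for the facets containing~$\delta_\bullet$, and no accordion diagonal lies in every facet, so the induction does not close. The paper's proof is genuinely different and recursive rather than local: after using Remark~\ref{rem:reduction} to make a dual leaf the triangle with internal diagonal~$(1_\circ,5_\circ)$, Lemma~\ref{lem:inductionTriangle} shows that each facet~$\dissection_\bullet$ contains a unique pair~$(2_\bullet,x_\bullet),(4_\bullet,x_\bullet)$; the facets are partitioned according to~$x_\bullet$, each class is split by the join of Remark~\ref{rem:links} into two strictly smaller instances handled by induction, and then---this is the step your sketch has no counterpart for---exactly one additional serpent with final edge~$(1_\circ,5_\circ)$ is appended, chosen to reach as far as possible along the zigzag crossed by~$(2_\bullet,x_\bullet)$ while remaining compatible with the inductively obtained serpents. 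The inverse map recovers~$x_\bullet$ from a serpent nest by a greedy edge-by-edge extension of the serpent containing~$(1_\circ,5_\circ)$, and this is where the real work of the proof lies. In particular, the serpents of the image are not in any diagonal-by-diagonal correspondence with the diagonals of~$\dissection_\bullet$, relative to a reference dissection or otherwise; any salvage of your approach needs a mechanism of this recursive kind, not a refinement of the local dictionary.
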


We need the following observation to prove Theorem~\ref{thm:bijectionDissectionsSerpentNests}.

\begin{lemma}
\label{lem:inductionTriangle}
Let~$\dissection_\circ$ be a hollow dissection, let~$\cell_\circ$ be a triangular cell of~$\dissection_\circ$ which is a leaf of~$\dissection_\circ^\dual$, whose unique internal diagonal of~$\dissection_\circ$ is~$(1_\circ,5_\circ)$. For any maximal~$\dissection_\circ$-accordion dissection~$\dissection_\bullet$, there exists a unique solid vertex~$x_\bullet>4_\bullet$ such that both solid (internal or external) diagonals~$(2_\bullet,x_\bullet)$ and~$(4_\bullet,x_\bullet)$ belong to~$\bar\dissection_\bullet$.
\end{lemma}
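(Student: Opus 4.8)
The plan is to reformulate the statement geometrically and then settle existence by a maximality argument and uniqueness by a noncrossing argument. First I would record the shape of the leaf cell: since $\cell_\circ$ is a triangular leaf of $\dissection_\circ^\dual$ with unique internal diagonal $(1_\circ,5_\circ)$, it shares exactly one diagonal with the rest of the dissection, so its two remaining sides $(1_\circ,3_\circ)$ and $(3_\circ,5_\circ)$ are boundary edges of the hollow polygon and $3_\circ$ is incident to no internal diagonal of $\dissection_\circ$. The two solid vertices $2_\bullet,4_\bullet$ sit between these hollow vertices, they are consecutive on $\polygon_\bullet$, so $(2_\bullet,4_\bullet)$ is a boundary edge and is automatically in $\bar\dissection_\bullet$. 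Existence then amounts to showing that the unique cell of $\dissection_\bullet$ bordering this boundary edge is a triangle, whose apex is the desired $x_\bullet$ (which is necessarily $>4_\bullet$, being a solid vertex distinct from $2_\bullet$ and $4_\bullet$).

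The technical heart is the following crossing observation. A diagonal of $\bar\dissection_\circ$ separates $2_\bullet$ from $4_\bullet$ exactly when one of its endpoints is the only hollow vertex on the short arc between $2_\bullet$ and $4_\bullet$, namely $3_\circ$; as $3_\circ$ lies on no internal diagonal, the only separating diagonals are the two boundary edges $(1_\circ,3_\circ)$ and $(3_\circ,5_\circ)$. Consequently, for every solid vertex $y_\bullet\notin\{2_\bullet,4_\bullet\}$, the chords $(2_\bullet,y_\bullet)$ and $(4_\bullet,y_\bullet)$ cross exactly the same diagonals of $\bar\dissection_\circ$ outside $\cell_\circ$, whereas inside $\cell_\circ$ the former crosses the incident pair $(1_\circ,3_\circ),(1_\circ,5_\circ)$ and the latter the incident pair $(3_\circ,5_\circ),(1_\circ,5_\circ)$. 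Since being a $\dissection_\circ$-accordion diagonal means that the crossed diagonals form an incident pair in each traversed cell, I would deduce that $(2_\bullet,y_\bullet)$ is a $\dissection_\circ$-accordion diagonal if and only if $(4_\bullet,y_\bullet)$ is. I expect this equivalence to be the main obstacle, precisely because it is where the leaf hypothesis on $\cell_\circ$ is indispensable: one must argue carefully that the traversed cells and their incident crossing pairs agree away from the triangle, so that the two crossed families are accordions simultaneously.

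Granting this, uniqueness is immediate: if $x_\bullet<x'_\bullet$ were two solid vertices $>4_\bullet$ with all four diagonals in $\bar\dissection_\bullet$, then $(2_\bullet,x_\bullet)$ and $(4_\bullet,x'_\bullet)$ would be crossing members of the dissection $\bar\dissection_\bullet$, a contradiction. For existence I would let $\cell_\bullet$ be the cell of $\dissection_\bullet$ bordering $(2_\bullet,4_\bullet)$ and let $x_\bullet$ be the neighbour of $4_\bullet$ along $\cell_\bullet$, so that $(4_\bullet,x_\bullet)\in\bar\dissection_\bullet$ and $x_\bullet>4_\bullet$. If $\cell_\bullet$ were not a triangle, then $x_\bullet$ would be a vertex of $\cell_\bullet$ non-adjacent to $2_\bullet$, so $(2_\bullet,x_\bullet)$ would be a genuine internal diagonal lying strictly inside $\cell_\bullet$; by the crossing observation it is a $\dissection_\circ$-accordion diagonal, and being interior to a cell it crosses no diagonal of $\dissection_\bullet$. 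Then $\dissection_\bullet\cup\{(2_\bullet,x_\bullet)\}$ would be a strictly larger $\dissection_\circ$-accordion dissection, contradicting maximality of $\dissection_\bullet$. Hence $\cell_\bullet$ is the triangle $2_\bullet 4_\bullet x_\bullet$ and $x_\bullet$ is the required vertex.
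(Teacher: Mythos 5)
Your proof is correct and follows essentially the same route as the paper's: uniqueness via the crossing of~$(2_\bullet,x_\bullet)$ and~$(4_\bullet,x'_\bullet)$, and existence via the key observation that~$(2_\bullet,y_\bullet)$ and~$(4_\bullet,y_\bullet)$ cross the same diagonals of~$\bar\dissection_\circ$ up to swapping the incident pair~$(1_\circ,3_\circ),(1_\circ,5_\circ)$ for~$(3_\circ,5_\circ),(1_\circ,5_\circ)$, so that maximality of~$\dissection_\bullet$ forces the partner diagonal to be present. The only cosmetic difference is in how the candidate vertex is produced: the paper takes~$x_\bullet$ minimal with~$(2_\bullet,x_\bullet)\in\bar\dissection_\bullet$ and adjoins~$(4_\bullet,x_\bullet)$, whereas you take the cell of~$\dissection_\bullet$ containing the boundary edge~$(2_\bullet,4_\bullet)$, let~$x_\bullet$ be the neighbour of~$4_\bullet$ in it, and adjoin~$(2_\bullet,x_\bullet)$.
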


\begin{proof}
For any two solid vertices~$4_\bullet<x_\bullet<x_\bullet'$, the solid diagonals~$(2_\bullet,x_\bullet)$ and~$(4_\bullet,x_\bullet')$ cross, which settles the uniqueness part. Let~$x_\bullet$ be the smallest element in~$[6_\bullet,(2n)_\bullet]_\bullet$ such that~$(2_\bullet,x_\bullet)$ is a (internal or external) diagonal of~$\bar\dissection_\bullet$. Observe that the solid diagonal~$(4_\bullet,x_\bullet)$ crosses no diagonal of~$\dissection_\bullet$, since such a diagonal should be of the form~$(2_\bullet,y_\bullet)$ with~$y_\bullet<x_\bullet$, contradicting the minimality of~$x_\bullet$. Moreover, the diagonal~$(4_\bullet,x_\bullet)$ crosses the same set of hollow diagonals as~$(2_\bullet,x_\bullet)$ but with the external diagonal~$(1_\circ,3_\circ)$ replaced by~$(3_\circ,5_\circ)$, so that it is a~$\dissection_\circ$-accordion solid diagonal. This concludes the proof since then~$(4_\bullet,x_\bullet)\in\dissection_\bullet$ by maximality of~$\dissection_\bullet$.
\end{proof}

The assumptions in Lemma~\ref{lem:inductionTriangle} do not introduce any real restriction on the number of vertices of~$\cell_\circ$ nor on its unique internal diagonal. Indeed Remark~\ref{rem:reduction} allows us to assume that~$\cell_\circ$ is triangular as soon as it is a dual leaf, and we may rotate the labels of the vertices of the polygon~$\polygon$ in order that the internal diagonal of~$\cell_\circ$ is~$(1_\circ,5_\circ)$. We thus keep these assumptions in the proof of Theorem~\ref{thm:bijectionDissectionsSerpentNests}. It consists in an induction relying on Lemma~\ref{lem:inductionTriangle} and the description of links in accordion complexes given in Remark~\ref{rem:links}. Informally we decompose any maximal~$\dissection_\circ$-accordion dissection~$\dissection_\bullet$ into two parts, according to its distinguished vertex~$x_\bullet$ given by Lemma~\ref{lem:inductionTriangle} (Figure~\ref{fig:bijectionDissectionsSerpentNests} left and middle left), and find a corresponding serpent nest inductively in each of them (Figure~\ref{fig:bijectionDissectionsSerpentNests} middle right). One must then remark that the two serpent nests in these two parts can be ``unfolded'' and gathered into a valid serpent nest~$\serpentNest$ of~$\dissection_\circ$ (nonbold serpents in Figure~\ref{fig:bijectionDissectionsSerpentNests} right). We then add a last serpent to~$\serpentNest$, whose final edges are~$(1_\circ,5_\circ)$ and the farthest possible diagonal of the ``zigzag crossed by both~$(2_\bullet,x_\bullet)$ and~$(4_\bullet,x_\bullet)$'' such that the new serpent  (bold in Figure~\ref{fig:bijectionDissectionsSerpentNests} right) does not create a validity obstruction in the local patterns inherited from~$\serpentNest$.

\begin{figure}
\centerline{\begin{overpic}[width=1.2\textwidth]{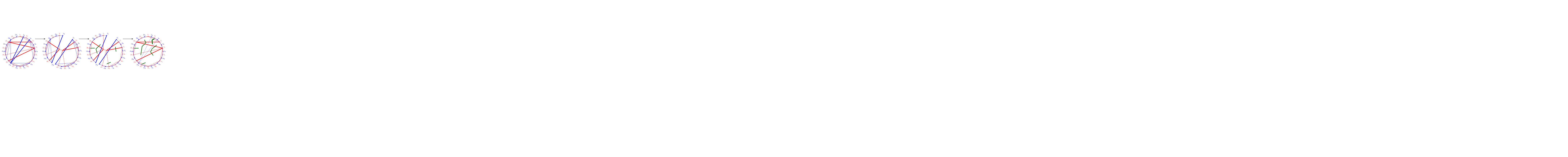}\end{overpic}}
\caption{A maximal~$\dissection_\circ^{\ex}$-accordion dissection~$\dissection_\bullet$ (left), and the serpent nest~$\bijectionDissectionsToSerpentNests{\dissection_\circ^{\ex}}(\dissection_\bullet)$ of~$\dissection_\circ^{\ex}$ defined in the proof of Theorem~\ref{thm:bijectionDissectionsSerpentNests} (right). Here the solid vertex~$x_\bullet$ given by Lemma~\ref{lem:inductionTriangle} is~$18_\bullet$ and the zigzag~$\zigzag_\circ$ defined in the proof of Theorem~\ref{thm:bijectionDissectionsSerpentNests} contains~$3$ (bold) diagonals. The bijection~$\bijectionDissectionsToSerpentNests{.}$ is applied inductively in each part of the link of~$\{(2_\bullet,18_\bullet),(4_\bullet,18_\bullet)\}$ in~$\accordionComplex(\dissection_\circ^{\ex})$ to obtain serpent nests whose serpents do not cross the diagonals~$(2_\bullet,18_\bullet)$ and~$(4_\bullet,18_\bullet)$ (middle). All these serpents are ``unfolded'' into a valid serpent nest in~$\dissection_\circ^{\ex}$ (right), to which a (bold) serpent ``between the diagonals~$(2_\bullet,18_\bullet)$ and~$(4_\bullet,18_\bullet)$'' is added. The final edges of this additional serpent are~$(1_\circ,5_\circ)$ and~$(5_\circ,25_\circ)$, which is the farthest diagonal of~$\zigzag_\circ$ after which it can end in order to be compatible with the serpents inductively obtained.}
\label{fig:bijectionDissectionsSerpentNests}
\vspace{-.8cm}
\end{figure}

\begin{proof}[Proof of Theorem~\ref{thm:bijectionDissectionsSerpentNests}]
The proof is by induction on the number of diagonals in~$\dissection_\circ$. If~$\dissection_\circ$ is empty, then the unique~$\dissection_\circ$-accordion dissection is the empty set and the unique serpent nest in~$\dissection_\circ$ is also the empty set. Now as previously explained, we can assume for the induction step that~$\dissection_\circ$ contains the diagonal~$(1_\circ,5_\circ)$ (since its dual tree has a leaf). Let~$\accordionComplex^{\max}(\dissection_\circ)$ be the set of maximal~$\dissection_\circ$-accordion dissections and~$\allSerpentNests(\dissection_\circ)$ be the set of serpent nests of~$\dissection_\circ$. We define bijections~$\bijectionDissectionsToSerpentNests{\dissection_\circ}:\accordionComplex^{\max}(\dissection_\circ)\rightarrow\allSerpentNests(\dissection_\circ)$ and~$\bijectionSerpentNestsToDissections{\dissection_\circ}:\allSerpentNests(\dissection_\circ)\rightarrow\accordionComplex^{\max}(\dissection_\circ)$ that are reverse to each other as follows.

Let~$\dissection_\bullet\in\accordionComplex^{\max}(\dissection_\circ)$ and let~$x_\bullet$ be the solid vertex such that~$\{(2_\bullet,x_\bullet),(4_\bullet,x_\bullet)\}\subseteq\dissection_\bullet$, given by Lemma~\ref{lem:inductionTriangle}. Let~$\dissection_\circ^{>x_\bullet}$ (resp~$\dissection_\circ^{<x_\bullet}$) be the hollow dissection obtained by contracting all diagonals of~$\dissection_\circ$ with both endpoints in~$[3_\circ,(x-1)_\circ]_\circ$ (resp.~$[(x+1)_\circ,3_\circ]_\circ$) into a single vertex~$c_\circ^1$ (resp.~$c_\circ^2$). Let~$\dissection_\bullet^{>x_\bullet}\in\accordionComplex^{\max}(\dissection_\circ^{>x_\bullet})$ (resp.~$\dissection_\bullet^{<x_\bullet}\in\accordionComplex^{\max}(\dissection_\circ^{<x_\bullet})$) be the dissection obtained by keeping only the diagonals of~$\dissection_\bullet$ with both endpoints in~$[x_\bullet,2_\bullet]_\bullet$ (resp.~$[4_\bullet,x_\bullet]_\bullet$). As both dissections~$\dissection_\circ^{>x_\bullet}$ and~$\dissection_\circ^{<x_\bullet}$ have less diagonals than~$\dissection_\circ$, the induction hypothesis provides us with bijections~$\bijectionDissectionsToSerpentNests{\dissection_\circ^{>x_\bullet}}:\accordionComplex^{\max}(\dissection_\circ^{>x_\bullet})\rightarrow\allSerpentNests(\dissection_\circ^{>x_\bullet})$ and~$\bijectionDissectionsToSerpentNests{\dissection_\circ^{<x_\bullet}}:\accordionComplex^{\max}(\dissection_\circ^{<x_\bullet})\rightarrow\allSerpentNests(\dissection_\circ^{<x_\bullet})$, whose reverse functions we respectively denote by~$\bijectionSerpentNestsToDissections{\dissection_\circ^{>x_\bullet}}:\allSerpentNests(\dissection_\circ^{>x_\bullet})\rightarrow\accordionComplex^{\max}(\dissection_\circ^{>x_\bullet})$ and~$\bijectionSerpentNestsToDissections{\dissection_\circ^{<x_\bullet}}:\allSerpentNests(\dissection_\circ^{<x_\bullet})\rightarrow\accordionComplex^{\max}(\dissection_\circ^{<x_\bullet})$. We then define
\[
\serpentNest_1\eqdef\bijectionDissectionsToSerpentNests{\dissection_\circ^{>x_\bullet}}(\dissection_\bullet^{>x_\bullet})\qquad\text{and}\qquad\serpentNest_2\eqdef\bijectionDissectionsToSerpentNests{\dissection_\circ^{<x_\bullet}}(\dissection_\bullet^{<x_\bullet}).
\]
Observe that a cell of~$\dissection_\circ$ different from~$(1_\circ,3_\circ,5_\circ)$ either contains at least two vertices in~$[3_\circ,(x-1)_\circ]_\circ$ and at most one in~$[(x+1)_\circ,3_\circ]_\circ$ or conversely, as both diagonals~$(2_\bullet,x_\bullet)$ and~$(4_\bullet,x_\bullet)$ cross accordions of~$\dissection_\circ$. The cells of~$\dissection_\circ$ are thus naturally partitioned and identified into the cells of~$\dissection_\circ^{>x_\bullet}$ and~$\dissection_\circ^{<x_\bullet}$. Moreover a subaccordion of~$\dissection_\circ^{>x_\bullet}$ (resp.~$\dissection_\circ^{<x_\bullet}$) naturally extends to a subaccordion of~$\dissection_\circ$ by replacing its diagonals~$(a_\circ^1,c_\circ^1)$ (resp.~$(a_\circ^2,c_\circ^2)$) by the set of all diagonals of the accordion crossed by~$(2_\bullet,x_\bullet)$ with~$a_\circ^1$ (resp.~$a_\circ^2$) as an endpoint. So serpents in~$\dissection_\circ^{>x_\bullet}$ (resp.~$\dissection_\circ^{<x_\bullet}$) are also naturally identified to some serpents in~$\dissection_\circ$. It is moreover clear that compatible serpents in~$\dissection_\circ^{>x_\bullet}$ (resp.~$\dissection_\circ^{<x_\bullet}$) extend to compatible serpents in~$\dissection_\circ$, and that any serpent in~$\dissection_\circ^{>x_\bullet}$ extends to a serpent in~$\dissection_\circ$ that is compatible with any serpent obtained by extending a serpent of~$\dissection_\circ^{<x_\bullet}$. We therefore abuse notations and still denote by~$\serpentNest_1\sqcup\serpentNest_2$ the corresponding serpent nests in~$\dissection_\circ$.
%
%
We first settle two degenerate cases.
\begin{itemize}
 \item If~$x_\bullet=(2n)_\bullet$, then we define~$\bijectionDissectionsToSerpentNests{\dissection_\circ}(\dissection_\bullet)=\serpentNest_1\sqcup\serpentNest_2$.
 \item If~$x_\bullet=6_\bullet$, then we define~$\bijectionDissectionsToSerpentNests{\dissection_\circ}(\dissection_\bullet)=\serpentNest_1\sqcup\serpentNest_2\sqcup\{\serpent\}$ where~$\serpent$ is the serpent of~$\dissection_\circ$ whose single edge corresponds to~$(1_\circ,5_\circ)$. It is clear that this serpent is compatible with all those in~$\serpentNest_1\sqcup\serpentNest_2$ since it does not share any common edge with them.
\end{itemize}
We are left with the case where both solid diagonals~$(2_\bullet,x_\bullet)$ and ~$(4_\bullet,x_\bullet)$ are internal. Let~$\zigzag_\circ=\{\delta_\circ^1,\dots\delta_\circ^\ell\}$ denote the zigzag of the accordion crossed by~$(2_\bullet,x_\bullet)$, where the diagonal~$(1_\circ,5_\circ)$ is considered as a boundary edge (and therefore not in~$\zigzag_\circ$), and such that~$\delta_\circ^i$ is incident to~$\delta_\circ^{i-1}$ and~$\delta_\circ^{i+1}$ for~$i\in[2,\ell-1]$. As we already dealt with the cases where~$x_\bullet\in\{6_\bullet,(2n)_\bullet\}$, the zigzag~$\zigzag_\circ$ is not empty, and we can assume by symmetry that~$5_\circ$ is an endpoint of~$\delta_\circ^1$. Let~$\serpent$ be the serpent of~$\dissection_\circ$ compatible with all serpents in~$\serpentNest_1\sqcup\serpentNest_2$ whose final edges are~$(1_\circ,5_\circ)$ and the diagonal~$\delta_\circ^{i_{\max}}$, where~$i_{\max}$ is maximal in~$[\ell]$ for this property. It is well-defined since
\begin{itemize}
 \item all dual paths in~$\dissection_\circ$ with final edges~$(1_\circ,5_\circ)$ and~$\delta_\circ^i$ ($i\in[\ell]$) are serpents of~$\dissection_\circ$, and
 \item the serpent with final edges~$(1_\circ,5_\circ)$ and~$\delta_\circ^1$ is compatible with all serpents in~$\serpentNest_1\sqcup\serpentNest_2$, by a quick case analysis.
\end{itemize}
We finally define
\[
\bijectionDissectionsToSerpentNests{\dissection_\circ}(\dissection_\bullet)\eqdef\serpentNest_1\sqcup\serpentNest_2\sqcup\{\serpent\}.
\]

To show that~$\bijectionDissectionsToSerpentNests{\dissection_\circ}$ is a bijection, we define its reverse bijection~$\bijectionSerpentNestsToDissections{\dissection_\circ}$. For this, we only need to show how to determine, given a serpent nest~$\serpentNest$ of~$\dissection_\circ$, the distinguished vertex~$x_\bullet$ of the maximal~$\dissection_\circ$-accordion dissection~$\bijectionSerpentNestsToDissections{\dissection_\circ}(\serpentNest)$. This~$x_\bullet$ should be chosen such that the serpents in~$\serpentNest$ then separate on each sides of the diagonals~$(2_\bullet,x_\bullet)$ and ~$(4_\bullet,x_\bullet)$, in order for us to conclude, using the reverse bijections~$\bijectionSerpentNestsToDissections{\dissection_\circ^{>x_\bullet}}:\allSerpentNests(\dissection_\circ^{>x_\bullet})\rightarrow\accordionComplex^{\max}(\dissection_\circ^{>x_\bullet})$ and~$\bijectionSerpentNestsToDissections{\dissection_\circ^{<x_\bullet}}:\allSerpentNests(\dissection_\circ^{<x_\bullet})\rightarrow\accordionComplex^{\max}(\dissection_\circ^{<x_\bullet})$. The way we determine the vertex~$x_\bullet$ is illustrated in Figure~\ref{fig:reverseBijection}.

\begin{figure}
\centerline{\includegraphics[width=1.2\textwidth]{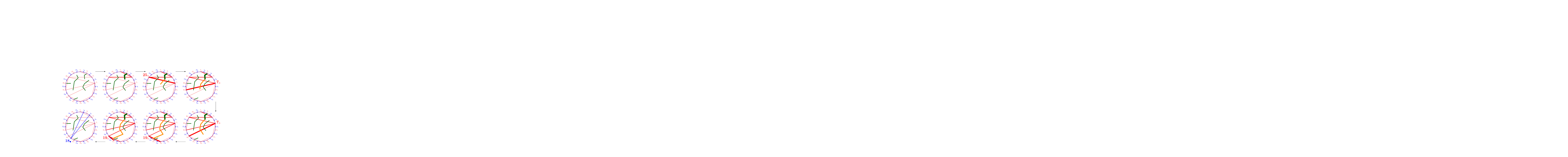}}
\caption{Illustration of the algorithm used to determine the image~$\bijectionSerpentNestsToDissections{\dissection_\circ^{\ex}}(\serpentNest)$ of a serpent nest~$\serpentNest$ of the hollow dissection~$\dissection_\circ^{\ex}$ (top left). In the successive pictures, the diagonals~$\gamma_i$'s of the proof appear bold, the currently defined one being double bold. At the end of the process, we are left with a vertex~$x_\bullet$ ($18_\bullet$ here) which allows to suitably separate the serpents of~$\serpentNest$ at each sides of the diagonals~$(2_\bullet,18_\bullet)$ and~$(4_\bullet,18_\bullet)$ (bottom left).}
\label{fig:reverseBijection}
\vspace{-1cm}
\end{figure}

The two ``degenerate'' cases where~$\serpentNest$ either contains no serpent containing~$(1_\circ,5_\circ)$ or the serpent whose unique edge is~$(1_\circ,5_\circ)$ are easily settled, as we dealt with them separately when defining~$\bijectionDissectionsToSerpentNests{\dissection_\circ}$. Suppose that~$\serpentNest$ contains a serpent~$\serpent$ with final edges~$(1_\circ,5_\circ)$ and a hollow diagonal~$\delta_\circ$. As~$(1_\circ,5_\circ)$ is incident to a dual leaf, there is no other serpent than~$\serpent$ in~$\serpentNest$ that contains it, since otherwise it would fulfill Condition~\ref{cond:serpentNestHeads} together with~$\serpent$. We now inductively define a sequence of hollow diagonals~$(\gamma_\circ^i)_{i\ge 1}$, such that for~$i\ge1$, the dual path from~$(1_\circ,5_\circ)$ to~$\gamma_\circ^i$ is a serpent, that we denote by~$\serpent_i$. In what follows, we denote by~$u_\circ^i$ the endpoint of~$\gamma_\circ^i$ contained in another edge of~$\serpent_i$.
\begin{itemize}
 \item Let~$\cell_\circ^1$ be the cell which is the endpoint (as dual node in~$\dissection_\circ^\dual$) of~$\serpent$ (as dual path in~$\dissection_\circ^\dual$) incident to~$\delta_\circ$. We let~$u_\circ^1$ be the endpoint of~$\delta_\circ$ not contained in another edge of~$\serpent$ and~$\gamma_\circ^1$ be the diagonal of~$\cell_\circ^1$ incident to~$\delta_\circ$ at~$u_\circ^1$. The edges of~$\serpent_1$ are then the edges of~$\serpent$ together with~$\gamma_\circ^1$, so that~$\gamma_\circ^1$ and~$u_\circ^1$ satisfy the required property.
 \item For~$i>1$, we consider the cell~$\cell_\circ^i$ which is the endpoint (as dual node in~$\dissection_\circ^\dual$) of~$\serpent_{i-1}$ (as dual path in~$\dissection_\circ^\dual$) incident to~$\gamma_\circ^{i-1}$. Let~$\lambda_\circ^i$ be the other diagonal of~$\cell_\circ^i$ containing~$u_\circ^{i-1}$. The dual path with final edges~$(1_\circ,5_\circ)$ and~$\lambda_\circ^i$ is then a serpent, that we denote by~$\serpent_{i-1}^+$. We distinguish two cases.
 	\begin{compactenum}[(i)]
 	 \item If~$\serpent_{i-1}^+$ is compatible with all serpents of~$\serpentNest\ssm\{\serpent\}$ not containing~$\lambda_\circ^i$, then we define~$\gamma_\circ^i\eqdef\lambda_\circ^i$ and~$u_\circ^i \eqdef u_\circ^{i-1}$.
 	 \item If a serpent in~$\serpentNest\ssm\{\serpent\}$ not containing~$\lambda_\circ^i$ is incompatible with~$\serpent_{i-1}^+$, then we let~$\gamma_\circ^i$ be the diagonal of~$\cell_\circ^i$ incident to~$\gamma_\circ^{i-1}$ different from~$\lambda_\circ^i$, which fulfills the required condition. Observe that in this case we necessarily have~$u_\circ^i \neq u_\circ^{i-1}$.
 	\end{compactenum}
\end{itemize}

Observe that any serpents~$\serpent_i$ ($i\ge1$) is compatible with all serpents of~$\serpentNest\ssm\{\serpent\}$ not containing~$\lambda_\circ^i$. This is clear for serpents~$\serpent_i$ obtained from Case~(i), and it follows from straightforward case analyses for~$\serpent_1$ and for serpents~$\serpent_i$ obtained from Case~(ii). The sequence~$(\gamma_\circ^i)_{i\ge1}$ cannot be infinite for there are finitely many hollow diagonals in~$\dissection_\circ$. It thus stops when the new diagonal~$\gamma_\circ^j$ (for some~$j\ge1$) that we want to define is an external hollow diagonal~$((x-1)_\circ,(x+1)_\circ)$ for some~$x_\bullet\in[2_\bullet,(2n)_\bullet]_\bullet$. In fact it is immediate that~$x_\bullet\in[8_\bullet,(2n-2)_\bullet]_\bullet$. Notice that the solid diagonal~$(2_\bullet,x_\bullet)$ crosses an accordion whose diagonals are those of~$\serpent_{j-1}$ (with~$\serpent_0=\serpent$ by convention) together with the external hollow diagonals~$(1_\circ,3_\circ)$ and~$((x-1)_\circ,(x+1)_\circ)$. Moreover the vertices of the zigzag of~$(2_\bullet,x_\bullet)$ are some vertices of edges of the serpent~$\serpent$ together with the vertices in~$\{u_\circ^i\,|\,i\ge0\}$, that we denote~$\{u_\circ^{i_1},\dots,u_\circ^{i_p}\}$ without repetition. Observe finally that the compatibility conditions on~$\serpent_j$ imply that any serpent of~$\serpentNest\ssm\{\serpent\}$ can be obtained by extending either a serpent of~$\dissection_\circ^{>x_\bullet}$ or of~$\dissection_\circ^{<x_\bullet}$ to~$\dissection_\circ$. Therefore~$\serpentNest\ssm\{\serpent\}$ splits into two serpent nests~$\serpentNest_1$ and~$\serpentNest_2$ in the hollow dissections~$\dissection_\circ^{>x_\bullet}$ and~$\dissection_\circ^{<x_\bullet}$ obtained from~$x_\bullet$. Let
\[
\bijectionSerpentNestsToDissections{\dissection_\circ}(\serpentNest)\eqdef\bijectionSerpentNestsToDissections{\dissection_\circ^{>x_\bullet}}(\serpentNest_1)\sqcup\bijectionSerpentNestsToDissections{\dissection_\circ^{<x_\bullet}}(\serpentNest_2)\sqcup\{(2_\bullet,x_\bullet),(4_\bullet,x_\bullet)\}.
\]
It remains to check that~$\bijectionSerpentNestsToDissections{\dissection_\circ}\circ\bijectionDissectionsToSerpentNests{\dissection_\circ}$ is the identity function on~$\accordionComplex^{\max}(\dissection_\circ)$. It is clear, from the definition in Cases~(i) and~(ii), that if~$u_\circ^i\neq u_\circ^{i-1}$ for some~$2\le i\le j$, then the serpent~$\serpent_{i-1}$ is incompatible with at least one serpent in~$\serpentNest\ssm\{\serpent\}$. Thus~$\serpent$ is the only serpent, among all serpents~$\serpent_i$ for $i\in\{0,i_1,\dots,i_p\}$, that is compatible with all serpents in~$\serpentNest\ssm\{\serpent\}$ and whose final edge different from~$(1_\circ,5_\circ)$ belongs to the zigzag of~$(2_\bullet,x_\bullet)$ (where~$(1_\circ,5_\circ)$ is considered as a boundary edge). This concludes the proof since it implies that the vertex~$x_\bullet$ given by Lemma~\ref{lem:inductionTriangle} is the same for~$\dissection_\bullet$ and~$\bijectionSerpentNestsToDissections{\dissection_\circ}\circ\bijectionDissectionsToSerpentNests{\dissection_\circ}(\dissection_\bullet)$.
\end{proof}

\section*{Acknowledgments}

I am grateful to Fr{\'e}d{\'e}ric Chapoton for interesting discussions about this problem and its motivations, that I did not develop here for sake of conciseness. This work was supported by a French doctoral grant Gaspard Monge of the {\'E}cole polytechnique (Palaiseau, France) and partially supported by the French ANR grant SC3A (15 CE40 0004 01).

\bibliographystyle{alpha}
\bibliography{accordionComplex}

\begin{thebibliography}{MHPS12}

\bibitem[Bar01]{Baryshnikov}
Yuliy Baryshnikov.
\newblock On {S}tokes sets.
\newblock In {\em New developments in singularity theory ({C}ambridge, 2000)},
  volume~21 of {\em NATO Sci. Ser. II Math. Phys. Chem.}, pages 65--86. Kluwer
  Acad. Publ., 2001.

\bibitem[Cha16]{Chapoton-quadrangulations}
Fr\'ed\'eric Chapoton.
\newblock Stokes posets and serpent nests.
\newblock {\em Discrete~Math.~Theor.~Comput.~Sci.}, 18(3), 2016.

\bibitem[GM16]{GarverMcConville}
Alexander Garver and Thomas McConville.
\newblock Oriented flip graphs and noncrossing tree partitions.
\newblock Preprint,
  \href{http://arxiv.org/abs/1604.06009}{\texttt{arXiv:1604.06009}}, 2016.

\bibitem[Man17]{Manneville-thesis}
Thibault Manneville.
\newblock {\em G{\'e}n{\'e}ralisations g{\'e}om{\'e}triques et combinatoires de
  l'associa{\`e}dre}.
\newblock PhD thesis, {\'E}cole polytechnique (Palaiseau), 2017.

\bibitem[MHPS12]{TamariFestschrift}
Folkert M{\"u}ller-Hoissen, Jean~Marcel Pallo, and Jim Stasheff, editors.
\newblock {\em Associahedra, {T}amari Lattices and Related Structures. Tamari
  Memorial Festschrift}, volume 299 of {\em Progr. Math.}
\newblock Birkh{\"a}user, Basel, 2012.

\bibitem[MP17]{MannevillePilaud-geometricRealizationsAccordionComplexes}
Thibault Manneville and Vincent Pilaud.
\newblock Geometric realizations of the accordion complex of a dissection.
\newblock Preprint,
  \href{http://arxiv.org/abs/1703.09953}{\texttt{arXiv:1703.09953}}, 2017.

\bibitem[Rea06]{Reading-CambrianLattices}
Nathan Reading.
\newblock Cambrian lattices.
\newblock {\em Adv.~Math.}, 205(2):313--353, 2006.

\end{thebibliography}
\label{sec:biblio}

\end{document}